\newtheorem{theorem}{Theorem}[section]
\newtheorem{lemma}[theorem]{Lemma}
\newtheorem{proposition}[theorem]{Proposition}
\theoremstyle{definition}
\newtheorem{definition}[theorem]{Definition}
\numberwithin{equation}{section}
\newcommand{\de}{\colon}
\newcommand{\st}{\mid}
\newcommand{\C}{\mathbb{C}}
\newcommand{\Q}{\mathbb{Q}} 
\newcommand{\Ar}{\mathcal{A}}
\DeclareMathOperator{\im}{im}
\DeclareMathOperator{\dist}{dist}
\DeclareMathOperator{\rk}{rk}
\DeclareMathOperator{\codim}{codim}
\DeclareMathOperator{\sgn}{sgn}
\begin{document}
\title[Subspace arrangements]{Rational homotopy type of subspace arrangements with a geometric lattice}

\author{Gery Debongnie}
\address{UCL, Departement de mathematique \\ Chemin du Cyclotron, 2 \\ B-1348 Louvain-la-neuve \\ Belgium}
\email{debongnie@math.ucl.ac.be}
\thanks{The author is an ``Aspirant'' of the ``Fonds National pour la Recherche Scientifique'' (FNRS), Belgium.}

\subjclass[2000]{Primary 55P62}
\keywords{Poincar\'e duality, elliptic spaces}

\begin{abstract}
Let $\Ar = \{x_1, \dotsc, x_n\}$ be a subspace arrangement with a geometric lattice such that $\codim(x) \geq 2$ for every $x \in\Ar$. Using rational homotopy theory, we prove that the complement $M(\Ar)$ is rationally elliptic if and only if the sum $x_1^\perp + \dotso + x_n^\perp$ is a direct sum. The homotopy type of $M(\Ar)$ is also given~: it is a product of odd dimensional spheres. Finally, some other equivalent conditions are given, such as Poincar\'e duality. Those results give a complete description of arrangements (with geometric lattice and with the codimension condition on the subspaces) such that $M(\Ar)$ is rationally elliptic, and show that most arrangements have an hyperbolic complement.
\end{abstract}

\maketitle

\section{Introduction}
%---------------------------------------------------------------
Let $l$ be an integer. A subspace arrangement $\Ar$ is a finite set of affine subspaces in $\C^l$. We say that $\Ar = \{x_1, \dotsc, x_n\}$ is central if all the affine subspaces $x_i$ are vector subspaces.

To every arrangement, we associate the set of non empty intersections of elements of $\Ar$. This set $L(\Ar)$ is partially ordered by $x \leq y \iff y \subseteq x$. Let $x, y \in L(\Ar)$. If $\Ar$ is central, we can define two operations on $L(\Ar)$~: the meet $x \wedge y = \cap\{z \in L(\Ar) \st x \cup y \subset z\}$ and the join $x \vee y = x \cap y$. With these two operations, $L(\Ar)$ is a lattice.

For every $x \in L(\Ar)$, there exists a longest maximal chain $\C^l < x_1 < \dots < x_r = x$. We say that the rank of $x$, $\rk(x)$, is $r$.  The lattice $L(\Ar)$ is called geometric if, for every $x, y \in L(\Ar)$, we have~: $\rk(x) + \rk(y) \geq \rk(x \wedge y) + \rk(x \vee y)$. % An arrangement $\Ar$ is called geometric if it is central and its lattice $L(\Ar)$ is geometric. 

The complement of a subspace arrangement $\Ar$ is the topological space \begin{equation*}M(\Ar) = \C^l \setminus \bigcup \Ar. \end{equation*}
In 2002, S. Yuzvinsky described a rational model for $M(\Ar)$ in \cite{yu02}. Later on, in \cite{yu05}, S. Yuzvinsky and E. Feichtner proved that if the lattice $L(\Ar)$ is geometric, then $M(\Ar)$ is formal and they give a simpler differential graded algebra model for $M(\Ar)$.

In section~\ref{sec:rht}, we state some basic properties of rational homotopy theory. The rational model defined by Yuzvinsky for $M(\Ar)$ is recalled in section~\ref{sec:rmsa}.  Arrangements with Poincar\'e duality are studied in section~\ref{sec:pd}. Finally, the main results are contained in section~\ref{sec:proof}.  

Briefly, the theorem~\ref{theo:a} shows that, under some conditions, the following statements are equivalent~: the subspace arrangement $\Ar$ has a rationally elliptic complement $M(\Ar)$, $\codim \cap_{x \in \Ar} x = \sum \codim x$, $M(\Ar)$ has Poincar\'e duality, $M(\Ar)$ has the homotopy type of a product of odd dimensional spheres. The theorem~\ref{theo:b} gives a geometric interpretation~: these statements are equivalent to the fact that $x_1^\perp + \dotso + x_n^\perp$ is a direct sum.  So, every arrangement with a geometric lattice and rationally elliptic complement is obtained by taking a direct sum $y_1 \oplus \dotso \oplus y_n$ of vector subspaces in $\C^l$ and then taking their orthogonal complement~: $\Ar = \{y_1^\perp, \dotsc, y_n^\perp\}$. 

It shows that most arrangements have an hyperbolic complement.  In that case, which is easy to check with the condition $\codim \cap_{x\in \Ar} x \neq \sum \codim(x)$, the sequence $\sum_{i \leq p} \rk \pi_i(M\Ar))$ has an exponential growth and for any integer $N$, there are infinitely many $q$ with $\rk \pi_q(M(\Ar)) \geq N$.

I would like to thank the referee for his/her work. In particular, the comments about the geometric interpretation were very helpful.

\section{Rational homotopy theory} \label{sec:rht}
%---------------------------------------------------------------
For the basic facts on rational homotopy, we will refer to the classical references (see \cite{su77} or \cite{fe00}).

Let $V$ be a graded vector space. The free commutative algebra on $V$, $\Lambda V$, is by definition the tensor product of the symmetric algebra on $V^{\text{even}}$ by the exterior algebra on $V^{\text{odd}}$. A minimal model is a differential graded algebra of the form $(\Lambda V, d)$ where $d(V) \subset \Lambda^{\geq 2} V$, and such that there is a basis of $V$, $(x_a)_{a \in A}$, indexed by a well-ordered set with the property that $d(x_a) \in \Lambda(x_b)_{b < a}$.

Each 1-connected space $X$ with finite Betti numbers admits a minimal model $(\Lambda V, d)$ that is unique up to isomorphism and that contains all the rational homotopy type of $X$. In particular, $\dim V^n = \dim \pi_n(X) \otimes \Q$. 

\begin{definition}
The space $X$ is called \emph{formal} if there is a quasi-isomorphism $(\Lambda V, d) \to (H^\star(X, \Q), 0)$.
\end{definition}

In the case of subspace arrangements, it is known that if the lattice $L(\Ar)$ is geometric then the space $M(\Ar)$ is formal (see~\cite{yu05}).

The dichotomy theorem in rational homotopy theory states that finite 1-connected CW-complexes are either elliptic or hyperbolic, with the following properties~: if $X$ is elliptic, then $\pi_n(X) =0$ for $n$ large enough and $H^\star(X;\Q)$ satisfies Poincar\'e duality.  If $X$ is hyperbolic, then the sequence $\dim \pi_n(X)$ has an exponential growth.

We will use the following theorem (see~\cite{fe82} for details)
\begin{theorem}[F\'elix-Halperin] \label{theo:fh}
If a space $X$ is elliptic and formal, then its minimal model has the form $(\Lambda V, d) = (\Lambda V_0 \oplus V_1, d)$ with $V_0 = V_0^{\text{odd}}\oplus V_0^{\text{even}}$, $\dim V_0^{\text{even}} = \dim V_1$, $V_1 = V_1^{\text{odd}}$, $dV_0 = 0$ and $dV_1 \subset \Lambda V_0$. Moreover, the injection $(\Lambda V_0^{\text{odd}}, 0) \to (\Lambda V, d)$ induces an injective map in cohomology.
\end{theorem}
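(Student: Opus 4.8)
The plan is to deduce the whole statement from the structure of the cohomology algebra $H=H^\star(X;\Q)$, using the structure theory of elliptic spaces together with the fact that, for a formal space, the minimal model coincides with the bigraded (Halperin--Stasheff) model of $H$. So throughout, $(\Lambda V,d)$ denotes this bigraded model, with its lower grading $V=\bigoplus_{i\geq0}V_i$: one has $dV_0=0$, $d(V_i)\subset\Lambda V_{<i}$ lowering the lower degree by one, $(\Lambda V_0,0)$ maps onto a minimal generating set of the algebra $H$, $V_1$ maps onto a minimal generating set of the ideal of relations, and $H(\Lambda V,d)\cong H$ is concentrated in lower degree $0$ (see~\cite{fe00}).

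Since $X$ is elliptic, $V$ is finite dimensional and $H$ is a finite dimensional Poincar\'e duality algebra, by the dichotomy theorem recalled above; one also has $\dim V^{\text{even}}\leq\dim V^{\text{odd}}$. The core of the proof is to establish two facts about $H$: \emph{(a)} $H$ is a complete intersection in the graded commutative sense, i.e.\ $H\cong\Lambda Z/(f_1,\dots,f_k)$, where $\Lambda Z$ is the free graded commutative algebra on a finite dimensional graded vector space $Z=Z^{\text{even}}\oplus Z^{\text{odd}}$ with $k=\dim Z^{\text{even}}$, the $f_i$ have even degree, and $(f_1,\dots,f_k)$ is a regular sequence with finite dimensional quotient; and \emph{(b)} the composite $\Lambda Z^{\text{odd}}\hookrightarrow\Lambda Z\to H$ is injective. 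Fact \emph{(a)} amounts to saying that the bigraded resolution of $H$ stops after the relations (no generators of lower degree $\geq2$), and then $Z=V_0$ while $V_1$ is spanned by classes $y_i$ with $dy_i=f_i$.

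To obtain \emph{(a)} and \emph{(b)} I would pass to the associated pure Sullivan algebra $(\Lambda V,d_\sigma)$, obtained by setting the differential to $0$ on $V^{\text{even}}$ and replacing $d$ on $V^{\text{odd}}$ by its projection onto $\Lambda V^{\text{even}}$; this is again elliptic, and a choice of basis of $V^{\text{odd}}$ adapted to $d_\sigma$ exhibits its cohomology as a graded complete intersection of the shape described in \emph{(a)}, with the exterior subalgebra on the remaining closed odd generators injecting as in \emph{(b)}. One then has to transfer this conclusion from $d_\sigma$ to $d$, using Poincar\'e duality on $H$ and the bigradedness of the minimal model (formality) to rule out essential higher syzygies; I expect precisely this transfer to be the main obstacle. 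Formality cannot be dropped here: the elliptic but non-formal Sullivan algebra $(\Lambda(a_3,b_3,c_5),\ dc=ab)$ has a cohomology algebra that is not a graded complete intersection, so the conclusion of the theorem fails for it.

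Granting \emph{(a)} and \emph{(b)}, the proof concludes by a direct check. Since the $f_i$ have even degree, $|y_i|:=|f_i|-1$ is odd, and because $X$ is $1$-connected the $f_i$ have no linear part, so $(\Lambda(Z\oplus\langle y_1,\dots,y_k\rangle),d)$ with $dZ=0$ and $dy_i=f_i\in\Lambda Z$ is a minimal Sullivan algebra; since $(f_1,\dots,f_k)$ is a regular sequence it is the Koszul complex of the ideal $(f_1,\dots,f_k)$, hence has cohomology $\Lambda Z/(f_1,\dots,f_k)\cong H$. By uniqueness it is the minimal model of $X$, and with $V_0=Z$, $V_1=\langle y_1,\dots,y_k\rangle$ it has exactly the asserted form: $dV_0=0$, $dV_1\subset\Lambda V_0$, $V_1=V_1^{\text{odd}}$, and $\dim V_0^{\text{even}}=k=\dim V_1$. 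Finally, the injection $(\Lambda V_0^{\text{odd}},0)=(\Lambda Z^{\text{odd}},0)\hookrightarrow(\Lambda V,d)$ composed with the quasi-isomorphism $(\Lambda V,d)\xrightarrow{\ \simeq\ }(H,0)$ is precisely the map of \emph{(b)}, which is injective; hence the injection induces an injective map in cohomology, and the proof is complete.
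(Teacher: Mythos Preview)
The paper does not prove this theorem: it is quoted with a reference to F\'elix--Halperin~\cite{fe82} and then used as a black box in the proof of Theorem~\ref{theo:a}. There is therefore no ``paper's own proof'' against which to compare your attempt.

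On your sketch itself: the overall strategy---use formality to identify the minimal model with the Halperin--Stasheff bigraded model of $H$, and then show this bigraded model stops at lower degree~$1$---is indeed the one used in~\cite{fe82}, and your final paragraph, deducing the stated structure from facts \emph{(a)} and \emph{(b)}, is correct. The gap you yourself flag is genuine, however. Passing to the associated pure algebra $(\Lambda V,d_\sigma)$ gives an elliptic Sullivan algebra whose cohomology is a complete intersection, but $H(\Lambda V,d_\sigma)$ is in general \emph{not} isomorphic to $H=H(\Lambda V,d)$; the two are related by the odd spectral sequence and share numerical invariants (Euler characteristic, formal dimension), but not the algebra structure. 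So the detour through $d_\sigma$ does not by itself establish that $H$ is a complete intersection, and you still owe the ``transfer'' step. The argument in~\cite{fe82} avoids this detour and works directly in the bigraded model, using Poincar\'e duality of $H$ and the minimality of the relations $d(V_1)\subset\Lambda V_0$ to rule out generators of lower degree~$\geq 2$. Your counterexample $(\Lambda(a_3,b_3,c_5),\,dc=ab)$ is apt and correctly shows the formality hypothesis cannot be dropped.
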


Finally, to use rational homotopy theory, we need spaces that are 1-connected.  The following lemmas will show that the space $M(\Ar)$ is 1-connected if the subspaces have all a codimension $\geq 2$.
\begin{lemma} \label{lemm:ext}
Let $\Ar = \{x_1, \dotsc, x_q\}$ be a central arrangement in $\C^l$ such that $\codim x_i \geq 2$. Let $y_i = x_i \cap S^{2l-1}$ and $f\de S^1 \to S^{2l-1} \setminus \cup_{i=1}^q y_i$ be a smooth map. Then $f$ extends to a map $\bar{f} \de D^2 \to S^{2l-1} \setminus \cup_{i=1}^q y_i$~:
\begin{equation*} \xymatrix@C=4mm@R=3mm{S^1 \ar[rr]^{f} \ar@{^{(}->}[dr] && S^{2l-1} \setminus \cup_{i=1}^q y_i \\  & D^2 \ar[ur]_{\bar{f}}}
\end{equation*}
\end{lemma}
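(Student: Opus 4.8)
The plan is to prove the lemma by an elementary transversality (general position) argument inside the sphere $S^{2l-1}$. First note that the hypothesis $\codim x_i \geq 2$ forces $l \geq 2$, so $2l-1 \geq 3$ and $S^{2l-1}$ is simply connected. Hence the map $f\de S^1 \to S^{2l-1}\setminus\bigcup_{i=1}^q y_i \hookrightarrow S^{2l-1}$, viewed as a map into $S^{2l-1}$, is null-homotopic and therefore extends to a continuous, and after a homotopy rel $S^1$ a smooth, map $g\de D^2 \to S^{2l-1}$. The whole point will then be to push $g$ off the $y_i$ without changing it on the boundary.

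Next I would record the dimensions. Writing $c_i = \codim x_i \geq 2$, the real subspace $x_i \subset \C^l = \mathbb{R}^{2l}$ has $\dim_{\mathbb{R}} x_i = 2l - 2c_i$, so $y_i = x_i \cap S^{2l-1}$ is either empty (if $x_i = \{0\}$) or a smoothly embedded great sphere in $S^{2l-1}$ of dimension $2l - 2c_i - 1 \leq 2l - 5$. In particular $\codim(y_i \subset S^{2l-1}) = 2c_i \geq 4$, and consequently
\begin{equation*}
\dim D^2 + \dim y_i = 2 + (2l - 2c_i - 1) = 2l + 1 - 2c_i < 2l - 1
\end{equation*}
because $c_i \geq 2$. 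This strict inequality is the only place the codimension hypothesis is used.

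Then I would invoke the relative transversality theorem. The map $g$ already sends $\partial D^2 = S^1$ into the complement of $y_1$, so $g|_{\partial D^2}$ is (vacuously) transverse to the submanifold $y_1$; hence $g$ may be homotoped rel $\partial D^2$ to a smooth map transverse to $y_1$. By the dimension count above, transversality to $y_1$ is the same as disjointness from $y_1$, so the new map has image missing $y_1$ and still restricts to $f$ on $S^1$. Iterating over $i = 2, \dots, q$, each successive perturbation taken small enough (in the $C^0$ sense) to preserve disjointness from the $y_j$ already avoided, produces after finitely many steps a smooth map $\bar f\de D^2 \to S^{2l-1}$ with $\bar f|_{S^1} = f$ and $\bar f(D^2) \cap \bigl(\bigcup_{i=1}^q y_i\bigr) = \emptyset$. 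This $\bar f$ is the required extension.

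I do not expect a genuine obstacle here: the argument is standard, and the subspheres $y_i$, although they may intersect one another, are handled one at a time since there are only finitely many. The only thing to be slightly careful about is the relative form of transversality (keeping $\bar f = f$ on $\partial D^2$) and the trivial case $x_i = \{0\}$, where $y_i = \emptyset$ and there is nothing to avoid.
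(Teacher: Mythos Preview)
Your proof is correct and follows essentially the same approach as the paper: both extend $f$ over $D^2$ into the sphere and then use relative transversality together with the dimension count $\dim D^2 + \dim y_i < \dim S^{2l-1}$ to push the disk off the $y_i$ one at a time. The paper packages the iteration as an induction on $q$ and, instead of appealing to $C^0$-smallness of the perturbation to preserve disjointness from the previously avoided $y_j$, explicitly excises a tubular neighborhood $T$ of $\cup_{j<q} y_j$ and applies transversality inside the manifold-with-boundary $S^{2l-1}\setminus T$.
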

\begin{proof}
The proof is done by induction on $q$.  The case $q=1$ is a direct consequence of corollary~15.7 in \cite{br93}. Let's assume the result true until $q-1$. Let $f\de S^1 \to S^{2l-1} \setminus \cup_{i=1}^q y_i$. By induction, we know that there exists a $\tilde{f}\de D^2 \to S^{2l-1} \setminus \cup_{i=1}^{q-1} y_i$ such that $\tilde{f}|_{S^1} = f$. Let $r >0$ such that $r < \dist(\tilde{f}(D^2), \cup_{i=1}^{q-1} y_i)$ and
\begin{equation*}
T = \{z \in S^{2l-1} \st \dist(z, \cup_{i=1}^{q-1} y_i) < r\}.
\end{equation*}
The way we constructed $T$ implies that $\im \tilde{f} \subset S^{2l-1} \setminus T$, which is a $(2l-1)$-dimensional manifold with boundary.  In it, $y_q \setminus T$ is a compact submanifold of dimension $< 2l-4$ (because it is of codimension $\geq 2$ in $\C^l$). The corollary~15.6 in~\cite{br93} gives the existence of a smooth map $\bar{f} \de D^2 \to S^{2l-1} \setminus T$ such that~: $\bar{f}|_{S^1} = \tilde{f}|_{S^1} = f$ and $\bar{f}(D^2)$ is transverse to $y_q \setminus T$. But $\dim \bar{f}(D^2) + \dim(y_q \setminus T) \leq 2 + 2l-4 < 2l-1$. So, transversality can only happen if $\bar{f}(D^2) \cap (y_q \setminus T) = \emptyset$.  Therefore, the application $\bar{f}(D^2)$ is such that $\im\bar{f}(D^2) \subset (S^{2l-1} \setminus T) \setminus (y_q \setminus T) \subset S^{2l-1} \setminus \cup_{i=1}^q y_i$. 
\end{proof}
\begin{lemma} \label{lemm:muc}
Let $\Ar$ be a subspace arrangement such that for each $x \in \Ar$, $\codim x \geq 2$. Then the space $M(\Ar)$ is 1-connected.
\end{lemma}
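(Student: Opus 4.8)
The plan is to reduce the general (possibly affine) case to the central case, and then exploit Lemma~\ref{lemm:ext} to kill loops. First I would reduce to a central arrangement: given an affine arrangement $\Ar = \{x_1,\dotsc,x_n\}$ in $\C^l$, embed $\C^l$ as an affine hyperplane in $\C^{l+1}$ not through the origin, and replace each affine subspace $x_i$ by the vector subspace $\hat{x}_i$ of $\C^{l+1}$ it spans (the cone construction). One checks that $\codim_{\C^{l+1}} \hat{x}_i = \codim_{\C^l} x_i \geq 2$, and that $M(\hat{\Ar})$ deformation retracts onto (a space homotopy equivalent to) $M(\Ar)$, or at least that $\pi_1(M(\Ar)) \cong \pi_1(M(\hat\Ar))$; so it suffices to treat central arrangements. (If one prefers to avoid the coning step, one can instead run the transversality argument of Lemma~\ref{lemm:ext} directly in $\C^l$ rather than on the sphere, at the cost of a slightly less clean statement.)

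So assume $\Ar$ is central in $\C^l$. Scaling the action of $\C^*$ shows that $M(\Ar) = \C^l \setminus \bigcup\Ar$ deformation retracts onto $S^{2l-1} \setminus \bigcup_{i} y_i$, where $y_i = x_i \cap S^{2l-1}$; indeed each $x_i$ is invariant under positive scaling, so the radial retraction $z \mapsto z/\lvert z\rvert$ is well-defined on $M(\Ar)$ and is a deformation retraction onto its intersection with the unit sphere. Hence it suffices to prove that $S^{2l-1}\setminus\bigcup_i y_i$ is $1$-connected. Note $2l-1 \geq 3$ here, since $\codim x_i \geq 2$ forces $l \geq 2$.

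Connectedness is immediate: $S^{2l-1}$ is connected and we are removing a finite union of submanifolds each of codimension $\geq 4$ (real codimension $2\cdot\codim x_i \geq 4$), so the complement is connected — a codimension $\geq 2$ subset cannot disconnect a connected manifold. For simple connectedness, take any loop $\gamma\de S^1 \to S^{2l-1}\setminus\bigcup_i y_i$; after a smooth homotopy we may assume $\gamma$ is smooth, which does not change its class in $\pi_1$. Now apply Lemma~\ref{lemm:ext} with $q = n$ and $f = \gamma$: it produces a smooth extension $\bar\gamma\de D^2 \to S^{2l-1}\setminus\bigcup_i y_i$ with $\bar\gamma|_{S^1} = \gamma$, which exhibits $\gamma$ as nullhomotopic. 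Since $\gamma$ was arbitrary, $\pi_1(S^{2l-1}\setminus\bigcup_i y_i) = 0$, and therefore $M(\Ar)$ is $1$-connected.

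The only genuine subtlety — the "main obstacle" — is the reduction from an affine arrangement to a central one: one must be careful that the cone $\hat{x}_i$ really has the same codimension (it does not pick up the extra dimension because $x_i$ was affine, not linear through the origin) and that passing to $M(\hat\Ar)$ does not alter $\pi_1$. Once the central case is reached, everything is soft: the radial deformation retraction and the single invocation of Lemma~\ref{lemm:ext} do all the work, and the smoothing of the loop is standard.
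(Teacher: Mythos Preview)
Your approach is essentially the same as the paper's: push the loop radially onto the unit sphere and then invoke Lemma~\ref{lemm:ext}; the paper does this by the explicit homotopy $h_t = (1-t)f + t\,f/\lVert f\rVert$ applied to a single loop rather than by a global deformation retraction, but this is the same idea. Your coning reduction to the central case is extra care that the paper simply omits---its proof tacitly assumes centrality (``since the $x_i$ are vector spaces''), which is the only case actually used later since the geometric-lattice hypothesis already forces the arrangement to be central.
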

\begin{proof}
Let $f\de S^1 \to M(\Ar)$ be a map. Since the $x_i$ are vector spaces, we can define the homotopy $h_t = (1-t)f + t\frac{f}{||f||}$. We can assume that the map $h_1 \de S^1 \to S^{2l-1} \setminus(\cup_{x \in \Ar} (x \cap S^{2l-1})$ is smooth. So, lemma~\ref{lemm:ext} can be applied and shows that $f \simeq h_1 \simeq \star$.  It implies that $\pi_1(M(\Ar)) = 0$, so $M(\Ar)$ is 1-connected.
\end{proof}

\section{Rational model of subspace arrangements} \label{sec:rmsa}
%---------------------------------------------------------------
Let $\Ar$ be a central arrangement of subspaces in $\C^l$. Yuzvinsky defined the relative atomic differential graded algebra $D_\Ar = (D, d)$ associated with an arrangement as follows (see~\cite{yu05})~: choose a linear order on $\Ar$. The chain complex $(D, d)$ is generated by all subsets $\sigma \subseteq \Ar$.  For $\sigma = \{x_1, \dotsc, x_n\}$, we define the differential by
\begin{equation*}
d\sigma = \sum_{j:\vee(\sigma \setminus \{x_j\}) = \vee \sigma} (-1)^j(\sigma\setminus\{x_j\})
\end{equation*}
where the indexing of the elements in $\sigma$ follows the linear order imposed on $\Ar$.  With $\deg(\sigma) = 2 \codim \vee \sigma - |\sigma|$, $(D,d)$ is a cochain complex.  Finally, we need a multiplication on $(D,d)$.  For $\sigma, \tau \subseteq \Ar$, 
\begin{equation*}
\sigma \cdot \tau = \left\{\begin{aligned} (-1)^{\sgn\epsilon(\sigma, \tau)} \sigma \cup \tau &\text{ if } \codim \vee \sigma + \codim \vee \tau = \codim \vee(\sigma \cup \tau) \\ 0 & \text{ otherwise} \end{aligned}\right.
\end{equation*}
where $\epsilon(\sigma, \tau)$ is the permutation that, applied to $\sigma \cup \tau$ with the induced linear order, places elements of $\tau$ after elements of $\sigma$, both in the induced linear order.

A subset $\sigma \subseteq \Ar$ is said to be independant if $\rk( \vee \sigma) = |\sigma|$. When $\Ar$ is an arrangement with a geometric lattice, we have the following property~: $H^\star(M(\Ar))$ is generated by the classes $[\sigma]$, with $\sigma$ independant (\cite{yu05}).

\section{Poincar\'e duality} \label{sec:pd}
%---------------------------------------------------------------
Having Poincar\'e duality is a strong statement for a subspace 
arrangement with geometric lattice. That condition alone determines the minimal model of the complement.

For $\Ar = \{x_1, \dotsc, x_n\}$ a subspace arrangement with $L(\Ar)$ geometric, let $M_r$ be the greatest element in $L(\Ar)$ and $\C^l < M_1 < \dotso < M_{r-1} < M_r$ be any maximal chain in $L(\Ar)$.  In particular, $\rk(M_i)=i$ for $1\leq i \leq r$. Let $X_i = \{x \in \Ar \st x < M_i \}$. We can construct a chain complex $C^i_\star$ : $C^i_p$ is the module generated by all the linear combinations of the $\sigma \subset \Ar$ such that $\vee \sigma = M_i$ and $|\sigma| = p$.  With the differential defined in the Yuzvinsky model of $M(\Ar)$ and $\deg(\sigma) = |\sigma|$, $C^i_\star$ is clearly a chain complex. 
\begin{lemma} \label{lemm:pda}
Let $\Ar = \{x_1, \dotsc, x_n\}$ be a subspace arrangement with geometric lattice.  If $M(\Ar)$ has Poincar\'e duality and $1 \leq k \leq r$, then $\dim H_k(C^k_\star) = 1$.
\end{lemma}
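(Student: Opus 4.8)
The plan is to connect the chain complex $C^k_\star$ to the cohomology of $M(\Ar)$ in a single degree and then invoke Poincaré duality to pin down the Betti number. First I would observe that the generators $\sigma$ with $\vee\sigma = M_k$ all have the same cohomological degree in the Yuzvinsky model, namely $2\codim M_k - |\sigma| = 2k - |\sigma|$ (using that $\codim$ of a rank-$k$ element equals $k$, since the lattice is geometric); so as $|\sigma|$ ranges over its possible values, these generators sweep out a range of degrees, and the piece of $C^k_\star$ in homological degree $p$ sits in cohomological degree $2k-p$ of $D_\Ar$. The subtlety is that $D_\Ar$ mixes contributions from many elements of $L(\Ar)$, so I would want to isolate the ``top'' element $M_k$: because the differential $d\sigma$ only keeps faces $\sigma\setminus\{x_j\}$ with $\vee(\sigma\setminus\{x_j\}) = \vee\sigma$, the subspace spanned by $\{\sigma : \vee\sigma \le M_k \text{ in the appropriate sense}\}$ is a subcomplex, and $C^k_\star$ is (a shift of) the associated graded piece corresponding exactly to $\vee\sigma = M_k$. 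This identifies $H_k(C^k_\star)$ with a specific graded summand of $H^\star(M(\Ar))$ — concretely the part of $H^k(M(\Ar))$ coming from independent sets $\sigma$ with $\vee\sigma = M_k$ and $|\sigma| = k$, i.e. $\rk(\vee\sigma) = |\sigma| = k$.

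Next I would use the generation statement from Section~\ref{sec:rmsa}: when $L(\Ar)$ is geometric, $H^\star(M(\Ar))$ is generated (as a module) by classes $[\sigma]$ with $\sigma$ independent. Combined with the degree bookkeeping, the key point to extract is that $H_k(C^k_\star)$ records the rank of the space of ``new'' classes in cohomological degree $2k - k = k$ associated with the fixed maximal chain element $M_k$, and that these classes are nonzero in $H^\star(M(\Ar))$. Then Poincaré duality enters: since $M(\Ar)$ is $1$-connected (Lemma~\ref{lemm:muc}) and formal, if it has Poincaré duality it is in particular elliptic-or-hyperbolic in the usual dichotomy, but more directly, Poincaré duality forces the top class to be unique and forces a pairing that, together with the structure of the maximal chain $M_1 < \dotso < M_r$, makes the contribution at each stage exactly one-dimensional. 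I would make this precise by an induction on $k$: the chain $\C^l < M_1 < \dotso < M_k$ gives a product-type subalgebra, and Poincaré duality of the whole forces Poincaré duality (or at least the right top-dimensional behaviour) on each factor, yielding $\dim H_k(C^k_\star) = 1$.

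Concretely, the steps in order: (1) fix notation and record $\codim M_i = i$; (2) show $\bigoplus_p C^k_p$ is, up to degree shift, the graded piece of $D_\Ar$ indexed by $\sigma$ with $\vee\sigma = M_k$, and that the Yuzvinsky differential restricts correctly, so $H_k(C^k_\star)$ is a subquotient of $H^\star(M(\Ar))$; (3) use the independent-set generation to interpret $H_k(C^k_\star)$ as genuine cohomology classes of $M(\Ar)$ living in degree $k$; (4) invoke Poincaré duality, together with the multiplicative structure coming from the maximal chain $M_1 < \dotso < M_r$ and the product rule for $\sigma\cdot\tau$ (which is nonzero exactly when codimensions add), to run an induction forcing each $H_k(C^k_\star)$ to be one-dimensional.

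The main obstacle I anticipate is step~(2): making rigorous that $C^k_\star$ computes precisely the right associated-graded piece of the Yuzvinsky model, i.e.\ controlling how the full differential $d\sigma = \sum (-1)^j(\sigma\setminus\{x_j\})$ interacts with the filtration by the value of $\vee\sigma$ along the chosen maximal chain, and checking that no classes from lower or incomparable lattice elements leak into degree $k$. Getting the bookkeeping of signs, degrees, and the induced linear order on $\Ar$ exactly right — and confirming that the relevant classes are not only present in $H^\star(M(\Ar))$ but survive as independent generators — is where the real work lies; once that identification is in hand, the Poincaré duality input is comparatively formal.
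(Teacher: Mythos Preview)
There are two genuine problems.

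First, your degree computation is wrong: you assert that ``$\codim$ of a rank-$k$ element equals $k$, since the lattice is geometric,'' but this confuses rank with codimension. For a general subspace arrangement (in fact already for $\codim x_i\geq 2$), one has $\codim M_k\geq 2k$, not $k$; only for hyperplane arrangements do rank and codimension agree. So the cohomological degree in which $H_k(C^k_\star)$ sits is $2\codim M_k-k$, not $k$, and in particular different lattice elements of the same rank can contribute in different degrees. This breaks your step~(3) as stated.

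Second, and more seriously, your step~(4) has no engine. You propose to ``invoke Poincar\'e duality, together with the multiplicative structure coming from the maximal chain $M_1<\dotso<M_r$ and the product rule for $\sigma\cdot\tau$'' to force each $H_k(C^k_\star)$ to be one-dimensional. But the product rule in $D_\Ar$ says $\sigma\cdot\tau\neq 0$ only when $\codim\vee\sigma+\codim\vee\tau=\codim\vee(\sigma\cup\tau)$, and you do not know this additivity for the intermediate $M_k$ --- indeed, that additivity is essentially equivalent to condition~(3) of Theorem~\ref{theo:a}, which is what the whole section is building toward. So you cannot assume the relevant pairings are nondegenerate on the individual lattice pieces, and the vague clause ``Poincar\'e duality of the whole forces Poincar\'e duality on each factor'' is exactly the statement that needs proof.

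The paper avoids this circularity entirely. It never multiplies classes attached to intermediate $M_k$. Instead it proves the monotonicity
\[
\dim H_k(C^k_\star)\;\leq\;\dim H_{k+1}(C^{k+1}_\star)
\]
by exhibiting an explicit subcomplex $E^{k+1}_\star\subset C^{k+1}_\star$ (built from subsets containing at least two atoms from $X_{k+1}\setminus X_k$, together with certain differences) such that $C^{k+1}_\star/E^{k+1}_\star\cong C^k_\star$ up to a degree shift, the isomorphism being ``append a fixed $y\in X_{k+1}\setminus X_k$.'' The long exact sequence then gives the inequality. Poincar\'e duality is used only once, at the top: it forces the top cohomology of $D_\Ar$ to be one-dimensional, and the multiplication rule (here the \emph{vanishing} direction, not the nonvanishing one) shows $\dim H_r(C^r_\star)\leq 1$. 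Since trivially $\dim H_1(C^1_\star)=1$, the chain of inequalities is squeezed to equalities.
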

\begin{proof}
In this proof, $X_0$ is the empty set. Let $E^i_p$ ($1 \leq i \leq r$) be the submodule of $C^i_p$ generated by 
\begin{itemize}
\item all the $\sigma \subset \Ar$ such that $|\sigma| = p$, $\vee \sigma = M_i$ and $\sigma$ contains at least 2 elements of $X_i \setminus X_{i-1}$, 
\item all the elements $\{x_{i_1}, x_{i_2}, \dotsc, x_{i_{p-1}}, y_1\} - \{x_{i_1}, \dotsc, x_{i_{p-1}}, y_2\}$ with $x_{i_j} \in X_{i-1}$ and $y_1, y_2 \in X_{i} \setminus X_{i-1}$.
\end{itemize}
It is easy to check that $E^i_\star$ is a subcomplex of $C^i_\star$. We have the following short exact sequences~:
\begin{equation*}
0 \to E^i_\star \to C^i_\star \to C^i_\star/ E^i_\star \to 0 .
\end{equation*}
Since $\Ar$ has a geometric lattice, the cohomology is generated by the classes $\sigma \in \Ar$ such that $\sigma$ is independant.  So, the (reduced) homology of $C^i_\star$ is $0$ in every degree except possibly the $i^\text{th}$ degree.  We have~: $H_\star(C^i_\star) = H_{i}(C^i_\star)$. For $1 \leq i \leq r$, we know that $\rk(M_i) = i$. It means that $E^{i+1}_{i}$ is an empty set and $H_{i}(E^{i+1}_\star) = 0$. Hence, the long exact sequence in homology associated with the short exact sequence above implies that the map $H_{i+1}(C^{i+1}_\star) \to H_{i+1}(C^{i+1}_\star/E^{i+1}_\star)$ is surjective.

Since $X_{i+1} \setminus X_{i}$ is non empty (because $\vee X_{i+1} = M_{i+1}$ and $\vee X_i = M_i)$, we can fix some $y \in X_{i+1} \setminus X_{i}$.  This $y$ define maps $\varphi_p \de C^i_p \to C^{i+1}_{p+1}/E^{i+1}_{p+1}$ sending $\{x_1, \dotsc, x_p\}$ to $[\{x_1, \dotsc, x_p, y\}]$. Since the lattice is geometric, if $\vee \{x_1,  \dotsc, \hat{x_j}, \dots, x_p\} < M_i$, then $\vee \{x_1, \dotsc, \hat{x_j}, \dotsc, x_p, y\} < M_{i+1}$. Therefore, the maps $\varphi_p$ commute with the differentials and define an isomorphism of chain complex $(C^i_\star)_p \to (C^{i+1}_\star / E^{i+1}_\star)_{p+1}$.  Hence, $H_p(C^i_\star) = H_{p+1}(C^{i+1}_\star/ E^{i+1}_\star)$. But, we proved that the map $H_{i+1}(C^{i+1}_\star) \to H_{i+1}(C^{i+1}_\star/ E^{i+1}_\star)$ is a surjection. So\begin{equation*}
\dim H_{i}(C^i_\star) = \dim H_{i+1}(C^{i+1}_\star/ E^{i+1}_\star) \leq \dim H_{i+1}(C^{i+1}_\star).
\end{equation*}
Since the space $M(\Ar)$ has Poincar\'e duality, there is a unique cohomology class in the highest degree in $H^\star(D_\Ar)$.  That cohomology class is represented by an independant $\sigma \in \Ar$ such that $|\sigma| = r$. If $\dim H_r(C_\star^r) \geq 2$, then there is another class $[\tau]$ and by Poincar\'e duality, there is an element $[\rho]$ in $H^\star(D_\Ar)$ such that $[\sigma] = [\rho] [\tau]$, but this is impossible by the multiplication law because $\codim \vee \sigma= \codim \vee \tau$. Therefore, $\dim H_r(C_\star^r) = 1$. From the following sequence of inequalities
\begin{equation*}
1 = \dim H_1(C^1_\star) \leq \dim H_2(C^2_\star) \leq \dots \leq \dim H_r(C^r_\star) = 1.
\end{equation*}
we deduce that $\dim H_k(C^k_\star) = 1$ for all $1 \leq k \leq r$.
\end{proof}

\begin{lemma} \label{lemm:pdb}
Let $\Ar = \{x_1, \dotsc, x_n\}$ be a subspace arrangement such that $L(\Ar)$ is geometric and $M(\Ar)$ has Poincar\'e duality. Let $M \in L(\Ar)$ with $\rk(M) = i$ and let $X(M) = \{x \in \Ar \st x \leq M \}$, then $\# X(M) = i$.
\end{lemma}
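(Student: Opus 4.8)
The plan is to use Lemma~\ref{lemm:pda} to turn the statement into the computation of a M\"obius number, and then to invoke the combinatorial fact that a geometric lattice with M\"obius number $\pm1$ is Boolean.

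First I would reduce to an element of a fixed maximal chain. Given $M \in L(\Ar)$ with $\rk(M) = i$, extend a maximal chain of the interval $[\C^l, M]$ to a maximal chain $\C^l < M_1 < \dotsb < M_{i-1} < M_i = M < \dotsb < M_r$ of $L(\Ar)$ (such a chain exists since $L(\Ar)$ is graded). The complex $C^i_\star$ attached to this chain is spanned by the $\sigma \subseteq \Ar$ with $\vee\sigma = M$ and graded by $|\sigma|$, so it depends only on $M$, and Lemma~\ref{lemm:pda} gives $\dim H_i(C^i_\star) = 1$. Note that any $x$ occurring in such a $\sigma$ satisfies $x \leq \vee\sigma = M$, so only the atoms of $X(M)$ are involved; and $[\C^l, M]$ is itself a geometric lattice of rank $i$ whose set of atoms is exactly $X(M)$.

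Next I would compute $\dim H_i(C^i_\star)$ combinatorially. Since $\rk(\vee\sigma) \leq |\sigma|$ in a geometric lattice, no $\sigma$ with $|\sigma| < i$ has $\vee\sigma = M$, so $C^i_p = 0$ for $p < i$; and, exactly as in the proof of Lemma~\ref{lemm:pda}, the geometric-lattice hypothesis makes $H_\star(C^i_\star)$ concentrated in degree $i$. Comparing with the Euler characteristic of $C^i_\star$ then gives $\dim H_i(C^i_\star) = (-1)^i \sum_{\sigma \subseteq X(M),\ \vee\sigma = M} (-1)^{|\sigma|}$, and Rota's crosscut theorem applied to $[\C^l, M]$ (with the set of atoms as crosscut) identifies the alternating sum with the M\"obius number $\mu(\C^l, M)$. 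As the left-hand side is non-negative we obtain $\dim H_i(C^i_\star) = |\mu(\C^l, M)|$, whence $|\mu(\C^l, M)| = 1$ by Lemma~\ref{lemm:pda}.

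Finally I would invoke: a geometric lattice of rank $i$ with M\"obius number $\pm1$ is Boolean, hence has exactly $i$ atoms; applied to $[\C^l, M]$ this yields $\#X(M) = i$. This last fact can be proved by induction on $i$ via deletion--contraction for the characteristic polynomial: if some atom $a$ is not a coloop, then $|\mu(\C^l, M)| = |\mu_{L \setminus a}(\C^l,M)| + |\mu_{L/a}(\C^l,M)| \geq 2$, so M\"obius number $\pm 1$ forces every atom to be a coloop, i.e.\ forces the lattice to be free. The main obstacle is precisely this combinatorial input: one must be willing to quote (or briefly reprove) the crosscut theorem and the deletion--contraction recursion, and to check that the matroids occurring in the recursion stay simple and of the expected rank so that the signs of the M\"obius functions are what one expects.
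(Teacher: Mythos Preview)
Your argument is correct and takes a genuinely different route from the paper's. The paper proceeds by a direct induction on $\rk M$: assuming $\#X(N)=\rk N$ for all $N$ of smaller rank, it first observes that for any $(i{+}1)$-element subset of $X(M)$ with join $M$ every face still has join $M$, so the boundary is the full simplicial boundary; it then shows by an explicit chain-level manipulation in $C^i_\star$ that every degree-$i$ cycle is homologous to one of the form $\sum_j \alpha_j\{x_1,x_{j_2},\dotsc,x_{j_i}\}$ with a fixed first atom $x_1$, and that no nonzero cycle of this special form is a boundary. Consequently, if $X(M)$ contained more than $i$ atoms, the two cycles $\{x_1,\dotsc,x_i\}$ and $\{x_1,\dotsc,x_{i-1},x_{i+1}\}$ would give linearly independent classes in $H_i(C^i_\star)$, contradicting Lemma~\ref{lemm:pda}.

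Your approach instead compresses the homological content of $C^i_\star$ into the single number $|\mu(\C^l,M)|$ via the Euler characteristic and the crosscut theorem, and then lets matroid combinatorics (deletion--contraction for the characteristic polynomial, alternation of the sign of $\mu$ on a geometric lattice, nonvanishing of $\mu$) deliver the conclusion that $[\C^l,M]$ is Boolean. The trade-off is clear: the paper's argument stays entirely inside the Yuzvinsky model and is self-contained, at the price of some delicate bookkeeping with boundaries; your argument is shorter and more conceptual, and in fact yields the stronger statement that each interval $[\C^l,M]$ is Boolean, but it imports several standard lattice/matroid facts that a reader of this paper may have to look up. One small remark: the phrase ``by induction on $i$'' is a little misleading, since your deletion--contraction step does not invoke the inductive hypothesis for the statement itself but only the background fact $|\mu|\geq 1$ for geometric lattices (which can, of course, be proved by the same recursion).
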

\begin{proof}
We prove the result by induction on $\rk M$. It is clear for $i = 1$. Now, let us suppose that it is true for all $N \in L(\Ar)$ with $\rk N \leq i-1$ and let $M \in L(\Ar)$ with $\rk M = i$.  Denote by $M_1 < M_2 < \dotso < M_i = M < M_{i+1} < \dotso < M_r$ a maximal sequence in $L(\Ar)$, and write
\begin{equation*}
X(M_{i-1}) = \{x_1, \dotsc, x_{i-1}\} \quad \text{and} \quad
X(M) = \{x_1, \dotsc, x_{i-1}, x_{i-1+1}, \dotsc, x_{i-1+l}\}.
\end{equation*}
We consider the chain complex $C_\star^i$ defined in lemma~5 for that maximal chain.  Remark first that if $\{ x_{n_1}, \dotsc, x_{n_{i+1}}\} \subset X(M)$ with $\vee x_{n_i} = M$, then for each $k$, $\vee \{ x_{n_1}, \dotsc, \hat{x}_{n_k}, \dotsc, x_{n_{i+1}}\} = M$, because otherwise $\vee \{ x_{n_1}, \dotsc, \hat{x}_{n_k}, \dotsc, x_{n_{i+1}}\}$ is an element $N$ in $L(\Ar)$ with $\rk N < i$.  So, there are $i$ subspaces $y_j$ with $y_j < N$, in contradiction with our induction hypothesis. Therefore, if $\{x_{n_1}, \dotsc, x_{n_{i+1}}\} \subset C_{i+1}^i$, then 
\begin{equation*}
d\{x_{n_1}, \dotsc, x_{n_{i+1}}\} = \sum_{j=1}^{i+1} (-1)^j \{x_{n_1}, \dotsc, \hat{x}_{n_j}, \dotsc, x_{n_{i+1}}\}.
\end{equation*}
In the complex $C_\star^i$, every cycle of degree $i$ is equivalent to a sum $\sum \alpha_j \{x_1, x_{j_2}, \dotsc, x_{j_i}\}$.  Indeed, if $1 \not\in \{j_1, \dotsc, j_i\}$, then
\begin{equation*}
\{x_{j_1}, \dotsc, x_{j_i}\} = -d\{x_1, x_{j_1}, \dotsc, x_{j_i}\} + \sum_{k=2}^i (-1)^k\{x_1, \dotsc, \hat{x}_{j_k}, \dotsc, x_{j_i}\}.
\end{equation*}
Now, no cycle of the form $\sum_j \alpha_j\{x_1, x_{j_2}, \dotsc, x_{j_i}\}$ is a boundary.  Suppose this is the case, we have : 
\begin{equation*}
\sum\nolimits_j \alpha_j\{x_1, x_{j_2}, \dotsc, x_{j_i}\} = d\left[ \sum\nolimits_m \beta_m \{x_1, x_{m_1}, \dotsc, x_{m_i}\} + \sum\nolimits_n \gamma_n \{x_{n_1}, \dotsc, x_{n_{i+1}}\}\right]
\end{equation*}
with $1 \not\in \{x_1, \dotsc, n_{i+1}\}$.  Developing the differential, we get
\begin{equation*}
0 = -\sum\nolimits_m \beta_m\{x_{m_1}, \dotsc, x_{m_i}\} + \sum\nolimits_n \gamma_n \left(\sum_{k=1}^{i+1} (-1)^k \{x_{n_1}, \dotsc, \hat{x}_{n_k}, \dotsc, x_{n_{i+1}}\} \right).
\end{equation*}
We deduce that
\begin{multline*}
d\left(\sum\nolimits_n \gamma_n\{x_1, x_{n_1}, \dotsc, x_{n_{i+1}} \}\right) \\
= -\sum_n \gamma_n \{x_{n_1}, \dotsc, x_{n_{i+1}} \} - \sum_n \gamma_n\left( \sum_{k=1}^{i+1} (-1)^k \{x_1, x_{n_1}, \dotsc, \hat{x}_{n_k}, \dotsc, x_{n_{i+1}}\}\right) \\
= - \sum\nolimits_n \gamma_n \{x_{n_1}, \dotsc, x_{n_{i+1}} \} -\sum\nolimits_m \beta_m\{x_1, x_{m_1}, \dotsc, x_{m_i}\}.
\end{multline*}
Since $d^2=0$, this gives $\sum_j \alpha_j\{x_1, x_{j_2}, \dotsc, x_{j_i} \} = 0$.  

We deduce from the above calculation that $l=1$, i.e. $X(M) = \{x_1, \dotsc, x_i\}$, because otherwise, the cycles $\{x_1, \dotsc, x_i\}$ and $\{x_1, \dotsc, x_{i-1}, x_{i+1}\}$ would be linearly independant in homology, in contradiction with lemma~\ref{lemm:pda}.
\end{proof}

\begin{proposition} \label{prop:pd}
Let $\Ar = \{x_1, \dotsc, x_n\}$ be a subspace arrangement.  If $L(\Ar)$ is geometric and $M(\Ar)$ has Poincar\'e duality, then the minimal model of $M(\Ar)$ is the algebra $(\Lambda(y_1, \dotsc, y_n), 0)$ where $\deg y_i = 2 \codim x_i -1$.
\end{proposition}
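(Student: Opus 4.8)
The plan is to use formality. As $L(\Ar)$ is geometric, $M(\Ar)$ is formal (and $1$-connected, by Lemma~\ref{lemm:muc}), so its minimal model coincides with the minimal model of $(H^\star(M(\Ar)),0)$. Since $(\Lambda(y_1,\dots,y_n),0)$ with $\deg y_i=2\codim x_i-1$ (all odd) is already a minimal model (zero differential, generators trivially well-ordered), it suffices to exhibit an isomorphism of graded algebras $H^\star(M(\Ar))\cong\Lambda(y_1,\dots,y_n)$, and I will do this by describing the model $D_\Ar$ completely explicitly.

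First I would prove that $L(\Ar)$ is Boolean. Applying Lemma~\ref{lemm:pdb} to the maximum $\bigcap\Ar$ of $L(\Ar)$, which lies below every element of $\Ar$ so that $X(\bigcap\Ar)=\Ar$, gives $r=\rk L(\Ar)=n$. Every atom of $L(\Ar)$ is one of the $x_i$ — the join of two or more distinct $x_i$ is an upper bound of those $x_i$ and hence has rank $\ge 2$ — so $L(\Ar)$ has at most $n$ atoms; since a geometric lattice of rank $n$ has at least $n$ atoms, it has exactly $n$, namely $x_1,\dots,x_n$, and a rank-$n$ geometric lattice on $n$ atoms is the Boolean lattice. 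Thus $M\mapsto X(M)$ identifies $L(\Ar)$ with $2^{\{1,\dots,n\}}$, distinct subsets of $\Ar$ have distinct joins, and $\rk\bigcap_{i\in S}x_i=|S|$ for all $S$. In particular, for $x_j\in\sigma$ one never has $\vee(\sigma\setminus\{x_j\})=\vee\sigma$, so the differential of $D_\Ar$ vanishes; hence $H^\star(M(\Ar))=H^\star(D_\Ar)=D_\Ar$ as algebras, with basis $\{\sigma_S\st S\subseteq\{1,\dots,n\}\}$, where $\sigma_S=\{x_i\st i\in S\}$ has degree $2\codim(\bigcap_{i\in S}x_i)-|S|$.

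The heart of the argument is to show that the codimensions are additive: $\codim\bigcap_{i\in S}x_i=\sum_{i\in S}\codim x_i$ for every $S$, equivalently that $x_1^\perp+\dots+x_n^\perp$ is a direct sum. This is where Poincar\'e duality enters. By the proof of Lemma~\ref{lemm:pda} the unique top-degree class of $H^\star(M(\Ar))$ is represented by an independent subset of cardinality $r=n$, hence by $\sigma_{\{1,\dots,n\}}$, which is non-zero because $d=0$; consequently $\sigma_{\{1,\dots,n\}}$ is the only basis element of top degree. Fix $i$. The class $\sigma_{\{i\}}$ is a non-zero basis element of degree $2\codim x_i-1$, so Poincar\'e duality provides a class $\eta$ of the complementary degree with $\sigma_{\{i\}}\cdot\eta\neq 0$. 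Expanding $\eta$ in the basis and recalling that $\sigma_S\cdot\sigma_T$ equals $\pm\sigma_{S\cup T}$ precisely when $\codim\bigcap_S x+\codim\bigcap_T x=\codim\bigcap_{S\cup T}x$ and vanishes otherwise, every surviving term of $\sigma_{\{i\}}\cdot\eta$ has top degree and is therefore a multiple of $\sigma_{\{1,\dots,n\}}$; this forces $\sigma_{\{i\}}\cdot\sigma_{\{1,\dots,n\}\setminus\{i\}}=\pm\sigma_{\{1,\dots,n\}}\neq 0$, i.e.
\begin{equation*}
\codim x_i+\codim\bigcap\nolimits_{j\neq i}x_j=\codim\bigcap\nolimits_{j}x_j .
\end{equation*}
Since $\codim\bigcap_j x_j-\codim\bigcap_{j\neq i}x_j=\dim x_i^\perp-\dim\bigl(x_i^\perp\cap\textstyle\sum_{j\neq i}x_j^\perp\bigr)$, this means $x_i^\perp\cap\sum_{j\neq i}x_j^\perp=0$ for every $i$, which is exactly the statement that $x_1^\perp+\dots+x_n^\perp$ is direct; additivity of codimension over arbitrary subsets follows. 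I expect this extraction of codimension-additivity from the existence of Poincar\'e-dual partners of the $\sigma_{\{i\}}$ to be the main obstacle, though it becomes short once $L(\Ar)$ is known to be Boolean and the fundamental class has been located via Lemma~\ref{lemm:pda}.

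Finally, with additivity the multiplication rule becomes $\sigma_S\cdot\sigma_T=\pm\sigma_{S\cup T}$ if $S\cap T=\emptyset$ and $0$ otherwise, so the algebra $D_\Ar$ is generated by $\sigma_{\{1\}},\dots,\sigma_{\{n\}}$, which lie in odd degrees $2\codim x_i-1$ and satisfy the exterior relations; hence the homomorphism $\Lambda(y_1,\dots,y_n)\to D_\Ar$, $y_i\mapsto\sigma_{\{i\}}$, is a well-defined surjection of graded algebras, and an isomorphism since both sides have dimension $2^n$. Combined with the first paragraph, the minimal model of $M(\Ar)$ is $(\Lambda(y_1,\dots,y_n),0)$ with $\deg y_i=2\codim x_i-1$.
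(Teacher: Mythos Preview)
Your proof is correct and follows the same skeleton as the paper's: invoke Lemma~\ref{lemm:pdb} to force $L(\Ar)$ to be Boolean, observe that the differential in $D_\Ar$ vanishes, and identify the cohomology with an exterior algebra on the singletons $\{x_i\}$. The paper's own proof is a two-line sketch (``every subset is independent, therefore any product $\{x_{i_1}\}\cdots\{x_{i_l}\}\neq 0$''), which actually leaves a gap: independence in the sense $\rk(\vee\sigma)=|\sigma|$ does \emph{not} by itself give the codimension additivity $\codim\vee\sigma+\codim\vee\tau=\codim\vee(\sigma\cup\tau)$ required for the Yuzvinsky product to be nonzero. Your Poincar\'e-duality argument---pairing each $\sigma_{\{i\}}$ against its dual and reading off $\codim x_i+\codim\bigcap_{j\neq i}x_j=\codim\bigcap_j x_j$, hence the directness of $\sum x_i^\perp$---is exactly what is needed to close that gap, and is the genuine content here. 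So your proposal is not a different route but rather a complete version of the paper's route.
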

\begin{proof}
It is a consequence from lemma~\ref{lemm:pdb}.  This lemma shows that every subset $\sigma \subset \Ar$ are independant. Therefore, any product $\{x_{i_1}\} \cdot \dotso \cdot \{x_{i_l}\} \neq 0$, and all the products are different in cohomology. 
\end{proof}

\section{Main results} \label{sec:proof}
%---------------------------------------------------------------
Now, everything is in place to prove the main results.  The first theorem uses rational homotopy theory and gives some equivalent conditions to the fact that $M(\Ar)$ is rationally elliptic.  With some linear algebra, the second theorem shows that the condition (3) has a geometric interpretation in term of the orthogonal subspaces $x_i^\perp$.
\begin{theorem}  \label{theo:a}
Let $\Ar$ be a subspace arrangement with a geometric lattice such that every $x \in \Ar$ has $\codim(x) \geq 2$. Then the following conditions are equivalent~:
\begin{enumerate}
\item $M(\Ar)$ is rationally elliptic,
\item $M(\Ar)$ has the rational homotopy type of a product of odd dimensional spheres,
\item $\codim \cap_{x \in \Ar} x = \sum \codim x$,
\item $M(\Ar)$ has the homotopy type of a product of odd dimensional spheres,
\item $M(\Ar)$ has Poincar\'e duality.
\end{enumerate}
\end{theorem}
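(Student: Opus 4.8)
The plan is to establish the cycle of implications $(1)\Rightarrow(5)\Rightarrow(3)\Rightarrow(4)\Rightarrow(2)\Rightarrow(1)$, which gives the equivalence of all five conditions at once. Three of the arrows are immediate. For $(2)\Rightarrow(1)$: a finite product of odd-dimensional spheres has finite-dimensional rational homotopy and finite-dimensional rational cohomology, hence is rationally elliptic, and ellipticity depends only on the rational homotopy type of the $1$-connected space $M(\Ar)$ — which is $1$-connected by Lemma~\ref{lemm:muc}, the codimension hypothesis being in force. The arrow $(4)\Rightarrow(2)$ needs nothing. Finally, $(1)\Rightarrow(5)$ is precisely the Poincar\'e-duality half of the dichotomy theorem recalled in Section~\ref{sec:rht}, applied to the finite $1$-connected complex $M(\Ar)$.

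The arrow $(5)\Rightarrow(3)$ is where Proposition~\ref{prop:pd} enters. Assuming Poincar\'e duality, that proposition identifies the minimal model of $M(\Ar)$ with $(\Lambda(y_1,\dotsc,y_n),0)$ and, in its proof, shows that the classes $[\{x_i\}]$ generate $H^\star(M(\Ar))\cong H^\star(D_\Ar)$ as an exterior algebra; in particular the product $[\{x_1\}]\dotsm[\{x_n\}]$ is non-zero. On the other hand, from the multiplication law of the Yuzvinsky algebra $D_\Ar$ together with the fact that $\codim(x\cap y)\le\codim x+\codim y$ for all subspaces, the iterated product $\{x_1\}\dotsm\{x_n\}$ equals $\pm\{x_1,\dotsc,x_n\}$ when $\sum_i\codim x_i=\codim\bigcap_i x_i$ and is $0$ otherwise (the codimension inequality forces every intermediate product to behave the same way). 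A product that vanishes in the cochain algebra $D_\Ar$ also vanishes in cohomology, so we are necessarily in the first case, which is exactly condition $(3)$.

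For $(3)\Rightarrow(4)$ the argument is elementary linear algebra. Since $\Ar$ is central, the $x_i$ are linear subspaces and $\bigcap_i x_i$ is a subspace contained in every $x_i$; choosing a complement $K$ of $\bigcap_i x_i$ in $\C^l$ one gets $x_i=(\bigcap_j x_j)\oplus(x_i\cap K)$ for each $i$, whence $M(\Ar)\cong(\bigcap_j x_j)\times\bigl(K\setminus\bigcup_i(x_i\cap K)\bigr)$, so $M(\Ar)$ is homotopy equivalent to $K\setminus\bigcup_i(x_i\cap K)$ and the codimensions computed inside $K$ agree with the original ones. Thus one may assume $\bigcap_i x_i=0$, and then $(3)$ reads $\dim\C^l=\sum_i\codim x_i$. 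The linear map $\pi=(\pi_1,\dotsc,\pi_n)\de\C^l\to\prod_i \C^l/x_i$ built from the quotient maps $\pi_i$ then has kernel $\bigcap_i x_i=0$ and target of dimension $\sum_i\codim x_i=\dim\C^l$, hence is an isomorphism; it carries each $x_i$ onto the subspace where the $i$-th coordinate vanishes, and therefore identifies $M(\Ar)$ with $\prod_i\bigl(\C^l/x_i\setminus\{0\}\bigr)$. Each factor $\C^{\codim x_i}\setminus\{0\}$ is homotopy equivalent to $S^{2\codim x_i-1}$, an odd sphere of dimension $\ge 3$ because $\codim x_i\ge 2$. This gives $(4)$ and closes the cycle; in particular the implications $(2)\Rightarrow(5)$ and $(4)\Rightarrow(5)$ come along for free.

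The substantive content of the theorem has already been absorbed into Proposition~\ref{prop:pd} and the lemmas of Section~\ref{sec:pd}; within this proof the only mildly delicate point is the step $(5)\Rightarrow(3)$, where the abstract Poincar\'e-duality hypothesis must be converted into the concrete identity $\codim\bigcap x=\sum\codim x$ by means of the vanishing criterion for products in $D_\Ar$. Everything else is bookkeeping.
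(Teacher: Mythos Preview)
Your proof is correct and coincides with the paper's on the implications $(3)\Rightarrow(4)$, $(4)\Rightarrow(2)$, $(2)\Rightarrow(1)$, and $(1)\Rightarrow(5)$; the linear-algebra argument for $(3)\Rightarrow(4)$ is the paper's argument verbatim up to notation. The one genuine difference is how you exit condition~$(1)$. The paper proves $(1)\Rightarrow(2)$ directly by invoking the F\'elix--Halperin structure theorem for formal elliptic spaces (Theorem~\ref{theo:fh}): the singleton classes $[\{x\}]$ lie in $V_0^{\text{odd}}$, the injection $\Lambda V_0^{\text{odd}}\hookrightarrow H^\star(\Lambda V)$ forces all products $\{x_1\}\dotsm\{x_k\}$ to be non-zero, and surjectivity follows because independent sets generate. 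Only separately does the paper close the loop through $(5)$ via Proposition~\ref{prop:pd}. Your cycle $(1)\Rightarrow(5)\Rightarrow(3)$ instead routes everything through Proposition~\ref{prop:pd} and never touches Theorem~\ref{theo:fh}; indeed, once the Section~\ref{sec:pd} machinery is in place, Theorem~\ref{theo:fh} is redundant for Theorem~\ref{theo:a}. This buys a small economy (one fewer external input), whereas the paper's direct $(1)\Rightarrow(2)$ has the virtue of not depending on the rather delicate combinatorics of Lemmas~\ref{lemm:pda} and~\ref{lemm:pdb}. Your step $(5)\Rightarrow(3)$ is essentially the paper's composite $(5)\Rightarrow(2)\Rightarrow(3)$ repackaged.
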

\begin{proof}
\emph{(1) implies (2).} Since $L(\Ar)$ is geometric, we know (see~\cite{yu05}) that $M(\Ar)$ is a formal space. If $M(\Ar)$ is elliptic, we can apply theorem~\ref{theo:fh}. By definition of the differential, every $x \in \Ar$, $\{x\}$ is a generator in cohomology for the rational model described in section~\ref{sec:rmsa}. The degree of $[\{x\}]$ is $2 \codim x - 1$.  Therefore, the $(\{x\})_{x \in \Ar}$ form a linearly independant sequence in $V_0^\text{odd}$. By theorem~\ref{theo:fh}, we have an injective map
\begin{equation*}
\rho\de \Lambda_{x \in \Ar} [\{x\}] \to H^\star(\Lambda V).
\end{equation*}
In particular, for each sequence $x_1, \dotsc, x_n$ in $\Ar$, with $
x_i \neq x_j$, we have 
\begin{equation*}\{x_1 \} \cdot \{x_2\} \cdot \dotsc \cdot \{x_n\} \neq 0
\end{equation*}
because their product is non zero in cohomology.  Therefore, we have the following equality $\prod_{i=1}^n \{x_i\} = \pm \{x_1, x_2, \dotsc, x_n\}$ and $[\{x_1, \dotsc, x_n\}] \neq 0$ (in cohomology).

The map $\rho$ is surjective because, for each independant set $\{x_1, \dotsc, x_n\}$ (which generates $H^\star(M(\Ar))$), we have $[\{x_1, \dotsc, x_n\}] = \pm \prod_{i=1}^n [\{x_i\}]$, which is in the image of $\rho$.  It implies that the map $\rho$ is an isomorphism. By lemma~\ref{lemm:muc}, $M(\Ar)$ is 1-connected. Therefore,  $M(\Ar)$ has the rational homotopy type of a product of odd dimensional spheres.

\emph{(2) implies (3).} We showed that the product $\prod_{x \in \Ar} \{x\} \neq 0$.  By definition of the product, it implies that $\codim \cap_{x \in \Ar} x = \sum_{x \in \Ar} \codim x$.

\emph{(3) implies (4).} Let $\Ar = \{x_1, \dotsc, x_n\}$ be a subspace arrangement in $\C^l$ such that $\codim\cap x_i = \sum \codim x_i$.  The quotient map $p \de \C^n \to \C^n/(\cap x_i)$ induces a homotopy equivalence $(\C^n \setminus \cup x_i) \to ((\C^n / \cap x_i) \setminus \cup (x_i/\cap x_i))$.  Hence we can assume that $\cap x_i = 0$.

Let's write $x_i = \ker(H_i \de \C^n \to \C^{n_i})$. The map 
\begin{equation*}
(H_1, H_2, \dotsc, H_n) \de \C^n \to \prod \C^{n_i}
\end{equation*}
is an isomorphism, which induces an homotopy equivalence 
\begin{equation*}
\C^n \setminus \cup x_i \to \prod_{i=1}^n (\C^{n_i} \setminus \{0\}).
\end{equation*}
But the injective map $\prod(S^{2n_i-1}) \to \prod(\C^{n_i} \setminus \{0\})$ is an homotopy equivalence.  Therefore $M(\Ar) = \C^n \setminus \cup x_i$ has the homotopy type of a product of odd dimensional spheres.

\emph{(4) implies (1).} Obvious.

\emph{(1) implies (5).} Obvious.

\emph{(5) implies (2).} Direct consequence from proposition~\ref{prop:pd}.
\end{proof}

\begin{theorem} \label{theo:b}
Let $\Ar = \{x_1, \dotsc, x_n\}$ be a subspace arrangement. Then the following conditions are equivalent~:
\begin{enumerate}
\item $\codim \cap_{i=1}^n x_i = \sum_{i=1}^n \codim x_i$,
\item the sum $x_1^\perp + \dotso + x_n^\perp$ is a direct sum.
\end{enumerate}
\end{theorem}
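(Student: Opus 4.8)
The plan is to treat Theorem~\ref{theo:b} as what it is, a purely linear-algebraic statement with no reference to the geometric lattice, and reduce it to a single dimension count. First I would dispose of the affine case: $\codim x_i$, the subspace $x_i^\perp$, and --- whenever the intersection is nonempty --- $\codim \bigcap_i x_i$ depend only on the direction spaces of the $x_i$, so replacing each $x_i$ by its direction we may assume throughout that every $x_i$ is a vector subspace of $\C^l$. (One should note, for the affine case, that condition~(2) already forces $\bigcap_i x_i\neq\emptyset$, so that condition~(1) is well-posed precisely when it could hold; I would just remark this and pass to the central case.)

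Next I would isolate two elementary facts about the orthogonal complement in $\C^l$, taken with respect to a fixed Hermitian inner product, so that $(z^\perp)^\perp = z$ and $\dim z + \dim z^\perp = l$ for every subspace $z$: first, $\codim x_i = \dim x_i^\perp$, which is immediate; second, the duality identity $\bigl(\bigcap_{i=1}^n x_i\bigr)^\perp = \sum_{i=1}^n x_i^\perp$. For the second I would first handle two subspaces: the inclusion $x^\perp + y^\perp \subseteq (x\cap y)^\perp$ is trivial, and equality follows by comparing dimensions, using $x^\perp\cap y^\perp = (x+y)^\perp$ together with the inclusion--exclusion formulas for $\dim(x^\perp+y^\perp)$ and $\dim(x+y)$. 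The general case then follows by induction on $n$, writing $\bigcap_{i=1}^n x_i = \bigl(\bigcap_{i=1}^{n-1} x_i\bigr)\cap x_n$, applying the two-subspace identity, and using the inductive identification of $\bigl(\bigcap_{i=1}^{n-1}x_i\bigr)^\perp$ with $\sum_{i=1}^{n-1}x_i^\perp$.

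Granting these two facts, the equivalence is immediate. From the first, $\sum_{i=1}^n \codim x_i = \sum_{i=1}^n \dim x_i^\perp$; from the two combined, $\codim \bigcap_{i=1}^n x_i = \dim\bigl(\bigcap_i x_i\bigr)^\perp = \dim\sum_{i=1}^n x_i^\perp$. Now for any finite family of subspaces one has $\dim\sum_i x_i^\perp \le \sum_i \dim x_i^\perp$, with equality if and only if the sum $x_1^\perp + \dots + x_n^\perp$ is direct --- this is the standard characterization of directness by additivity of dimension. Therefore condition~(1), namely $\dim\sum_i x_i^\perp = \sum_i \dim x_i^\perp$, holds if and only if condition~(2) holds.

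There is no genuine obstacle here; the only points calling for a little care are the affine-to-linear reduction at the outset and the bookkeeping in the induction for the duality identity (one must make sure $\bigl(\bigcap_{i<n}x_i\bigr)^\perp$ has already been rewritten as $\sum_{i<n}x_i^\perp$ before invoking the two-subspace case). The characterization of directness of a sum by additivity of dimension can simply be quoted, and none of the argument uses that $L(\Ar)$ is geometric, which is why the theorem is stated for arbitrary arrangements.
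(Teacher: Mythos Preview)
Your argument is correct and follows essentially the same route as the paper: establish $\bigl(\bigcap_i x_i\bigr)^\perp = \sum_i x_i^\perp$ by induction from the two-subspace case, then invoke the characterization of a direct sum by additivity of dimension. The only difference is that you add an explicit affine-to-central reduction at the outset, which the paper silently omits (it tacitly treats the $x_i$ as vector subspaces); this is a harmless clarification rather than a different approach.
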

\begin{proof}
First, let's prove by induction on $k$, $2 \leq k \leq n$, that~: \[
\left(\cap_{j=1}^k x_j\right)^\perp = \sum\nolimits_{i=1}^k x_i^\perp.
\]
For $k=2$, it gives $(x_1 \cap x_2)^\perp = x_1^\perp + x_2^\perp$, which is a well-known fact. Now, let's suppose that the formula is true until $k-1$.  We have~: \[
\left(\cap_{j=1}^k x_j\right)^\perp = (\cap_{j=1}^{k-1} x_j \cap x_k)^\perp = (\cap_{j=1}^{k-1} x_j )^\perp + x_k^\perp.
\]
Using the induction hypothesis concludes the proof. Now, we can prove the theorem~:
\begin{multline*}
x_1^\perp + \dotso + x_n^\perp \text{ is a direct sum} \iff \sum\nolimits_{i=1}^n \dim x_i^\perp = \dim\left(\sum\nolimits_{i=1}^n x_i^\perp\right) \\
 \iff \sum\nolimits_{i=1}^n \codim x_i = \dim \left( \cap_{i=1}^n x_i  \right)^\perp  \iff \sum\nolimits_{i=1}^n \codim x_i = \codim \cap_{i=1}^n x_i. \qedhere
\end{multline*}
\end{proof}

\end{document}